\DeclareMathOperator*{\argmax}{arg\,max}
\theoremstyle{thmstyleone}%
\newtheorem{theorem}{Theorem}
\newtheorem{proposition}[theorem]{Proposition}%
\theoremstyle{thmstyletwo}%
\newtheorem{example}{Example}%
\theoremstyle{thmstylethree}%
\newtheorem{definition}{Definition}%
\begin{document}

\title[Sampling models for selective inference]{Sampling models for selective inference
}


\author*[1]{\fnm{Daniel} \sur{García Rasines}}\email{daniel.garciarasines@cunef.edu}
\equalcont{These authors contributed equally to this work.}

\author[2]{\fnm{G. Alastair} \sur{Young}}\email{alastair.young@imperial.ac.uk}
\equalcont{These authors contributed equally to this work.}

\affil*[1]{\orgdiv{Department of Quantitative Methods}, \orgname{CUNEF Universidad}, \orgaddress{\street{Calle Almansa 101}, \city{Madrid}, \postcode{28040}, \country{Spain}}. ORCID: 0000-0002-1558-5860}

\affil[2]{\orgdiv{Department of Mathematics}, \orgname{Imperial College London}, \orgaddress{\street{Exhibition Road}, \city{London}, \postcode{SW7 2AZ}, \country{United Kingdom}}. ORCID: 0000-0002-8333-7981}


\abstract{This paper explores the challenges of constructing suitable inferential models in scenarios where the parameter of interest is determined in light of the data, such as regression after variable selection. Two compelling arguments for conditioning converge in this context, whose interplay can introduce ambiguity in the choice of conditioning strategy: the Conditionality Principle, from classical statistics, and the `condition on selection' paradigm, central to selective inference. We discuss two general principles that can be employed to resolve this ambiguity in some recurrent contexts. The first one refers to the consideration of how information is processed at the selection stage. The second one concerns an exploration of ancillarity in the presence of selection. We demonstrate that certain notions of ancillarity are preserved after conditioning on the selection event, supporting the application of the Conditionality Principle. We illustrate these concepts through examples and provide guidance on the adequate inferential approach in some common scenarios.}

\keywords{Selection, inference, conditioning, ancillarity}



\maketitle

\section{Introduction}

Many statistical methods rely on the assumption that the inferential questions have been specified before analysing the data. However, actual statistical workflows tend to be more dynamic. Often, upon interacting with the data, the analyst determines which aspects of the data-generating process are most interesting to analyse, and then reuses the same dataset to study them. This is particularly relevant in modern science, where the size and complexity of datasets make it difficult to determine an appropriate set of inferential objectives beforehand. It is well known that this adaptivity renders classical statistical procedures invalid, typically leading to large estimation biases and uncertainty underestimation. This problem, commonly referred to as selective inference, has garnered considerable attention in the past few years. This paper is concerned with the appropriate choice of statistical model in this context.

To fix ideas, let us consider a standard problem concerning selection: the `selected mean' problem. Suppose that $Y_1, \ldots, Y_n$ are independent normal observations with different unknown means $\theta_1, \ldots, \theta_n$ and known common variance, and that we seek inference for the means of the largest $k$ observations for some $k < n$. A problem like this can arise, for example, in a clinical trial experiment, where $n$ treatments are tested and only the most promising ones are put forward for further investigation. In this situation, reporting standard estimates and confidence intervals for the selected parameters ignoring selection---a face-value approach---will likely lead to flawed conclusions, as the largest observations tend to overestimate their corresponding means. This particular case of selection bias is known in the literature as the winner's curse. Besides being important in its own right, this problem provides a simple theoretical benchmark for the analysis of more complex scenarios. 

A general formulation of selection problems is as follows. Suppose that we have data $Y\in \mathcal{Y}$, whose sampling distribution we model by some parametric family $\{F(y; \theta)\colon \theta\in \Theta \}$, where $F(y; \theta)$ is the distribution function of $Y$ under $\theta$. We assume throughout that $\mathcal{Y}$ and $\Theta$ are both subsets of a Euclidean space, and that all the distributions in the family are dominated by the Lebesgue or counting measures. The corresponding Lebesgue densities or probability mass functions will be denoted by $f(y; \theta)$. In addition, we assume that there exists a set of $m$ potential parameters of interest, $\{\psi_1(\theta), \ldots, \psi_m(\theta)\}$, from which at most one is to be selected for inference after observing the data, possibly by an artificially randomised procedure. For example, in the previous problem, the set of potential parameters of interest would contain all the subsets of $\{\theta_1, \ldots, \theta_n\}$ of size $k$. Thus, we assume that there exist functions $ p_i\colon \mathcal{Y} \to [0, 1]$, $i = 1, \ldots, m$, such that, having observed $Y = y$, $\psi_i(\theta)$ is selected for inference with probability $p_i(y) = \mathbb{P}(I = i\mid y)$, where $I$ denotes the random index of the selected parameter.

Under this formulation of the problem, at most one of the parameters can be selected, so when considering a specific parameter we will generally drop the subscript and write it simply as $\psi \equiv \psi_i(\theta)$, and the corresponding selection probability as $p(y) \equiv p_i(y)$. The latter function, which encodes all the relevant information regarding the selection mechanism of $\psi$, will be referred to as the selection function. By randomised selection procedure, we mean one for which the decision to select a parameter for inference depends not only on $y$, but also on a random element independent of $Y$, typically random noise generated by the statistician. In these cases, $p(y)$ is not an indicator function. This is typically carried out to increase the power of the subsequent inference \citep{tian}. The following examples illustrate the framework.

\begin{example} \label{EX: file_drawer}
\textit{File drawer problem}. Let $Y_1, \ldots, Y_{n_1} \in \mathbb{R}$ be a random sample from a distribution with scalar parameter $\theta$, and suppose that we use this sample to test $H_0\colon \theta = 0$ against $H_a\colon \theta > 0$, because we are only interested in $\theta$ if it is positive. If the test results in rejection, we collect a second set of samples $Y_{n_1 + 1}, \ldots, Y_n$ to obtain more information about $\theta$. At the end, the whole data vector is $Y = (Y_1, \ldots, Y_n)^T$. If the testing procedure rejects $H_0$ whenever $T(Y_1, \ldots, Y_{n_1}) > t$ for some statistic $T$ and some threshold $t$, the selection function is $p(y) = \mathbf{1}\{ T(y_1, \ldots, y_{n_1}) > t \}$, where $\mathbf{1}\{\cdot \}$ is the indicator function. Within this context, a randomised procedure could be of the form $T(y_1, \ldots, y_{n_1}) + W > t$, where $W$ is generated from a known distribution independently of the data, and the resulting selection function would be $p(y) = \mathbb{P}\{ T(y_1, \ldots, y_{n_1}) + W > t \}$.
\end{example}

Now, suppose that for a certain population under study the true parameter is $\theta = 0$, and that a process like the one described before is carried out by several independent researchers. If only those analyses for which the initial test rejected the null are published, the literature concerning this parameter will be filled with results that overstate the true effect size, leading to the false belief that $\theta$ is positive, when in fact it is not. This is a stylised example of what \cite{rosenthal} termed the `file drawer effect', also known as publication bias, which occurs when the decision of whether to make a scientific finding public is influenced by the outcome of the analysis. Another important family of selection problems concerns inference for a regression model after variable selection. This is discussed in greater depth in Section \ref{SEC: sampling}.

\begin{example} \label{EX: regression}
\textit{Regression model}. Let $Y \in \mathbb{R}^n$ be a response vector and $X\in \mathbb{R}^{n\times p}$ be a known, fixed design matrix containing the observed values of $p$ covariates. Suppose that a variable-selection algorithm is used to select a non-empty subset $s\subseteq \{1, \ldots, p\}$ of the covariates. We think of $s$ as being the output of some algorithm such as the LASSO or best subset selection applied to $(Y, X)$. Given $s$, we may wish to provide inference for the best linear predictor of $Y$ given only the selected covariates, $\psi = \{X(s)^T X(s)\}^{-1} X(s)^T \mathbb{E}_\theta[Y\mid X]$, where $X(s)$ is the submatrix of $X$ containing the observations of the selected covariates. Denoting by $E(s)  \subseteq \mathbb{R}^n$ the set of observations for which the variable-selection algorithm would have produced the same set $s$, we can write the selection function as $p(y) = \mathbf{1}\{y\in E(s)\}$.
\end{example}

Like many other fundamental problems in statistics, selection issues were first studied by R.A. Fisher in \cite{fisher34}, who considered the sampling bias arising in albinism studies. The type of problem considered by Fisher is one in which independent samples $Y_1, Y_2, \ldots$ from a given population are generated and each sample is recorded or not with probability $w(y_i)$, so that the data available for inference constitutes a random sample from a weighted distribution $ g(y;\theta) \propto f(y; \theta) w(y)$. There is a large body of literature dedicated to the study of these models. A good survey is given by \cite{rao85}. 

Although these problems fit within the framework considered here, on defining the selection function $p(y_1, \ldots, y_n) = \prod_{i = 1}^n w(y_i)$, they are different in nature to the kind of problems studied in this work. Conceptually, the main difference is that in our problems selection refers to the decision of providing or not inference for a given parameter, while in the other case the inferential objective is decided in advance, and selection refers to an inherent bias of the sampling mechanism. Mathematically, the selection functions considered here cannot usually be broken up into marginal selection functions for the different samples: the decision of providing inference for a given parameter depends on some summary statistic of the full dataset or a subset of it. The implications of this are important. When $p(y_1, \ldots, y_n) = \prod_{i = 1}^n w(y_i)$ and $Y_1, \ldots, Y_n$ are independent,  the distribution induced by the selection rule can be factorised into independent components. By contrast, the selection rules considered here induce non-trivial dependencies between the observations, and such dependencies do not vanish when the sample size increases except in trivial circumstances. 

Importantly, in this work we are only concerned with situations where the selection function of the selected parameter is known, at least to a very high degree of precision. Inference with a fully unknown selection function is ill-posed, as the  model becomes highly non-identifiable. This problem has been considered by some authors \citep[e.g.][]{bayarri-degroot} in cases where the selection function is assumed to be of the form $p(y) =\prod_{i = 1}^n w(y_i)$. 

\section{The conditional approach to selective inference}

Statistical inference for selected parameters constitutes a major source of discrepancy between the frequentist and Bayesian schools. Within the former framework, nominal error rates should ideally match (or at least not exceed) the actual errors reported on repeated use of a statistical procedure. Since inferences on a selected parameter are only reported for those samples that lead to selection of said parameter, errors should be assessed relative to hypothetical repetitions of the data-generating process that lead to consideration of the same parameter of interest. Thus, type-I and type-II errors, biases, coverages, and other measures of interest are computed conditionally on the output of the selection stage. We refer to the general principle of conditioning inferences on the selection event as the \textit{conditional approach} to selective inference. 

By contrast, the Bayesian stance is more contested. The classical argument asserts that, given that Bayesian inferences are fully conditioned on the data, they are, in particular, conditioned on the selection step. Therefore, no further adjustment is required \citep{dawid}. Two major criticisms have been voiced against this position. The first one, due to \cite{MR1, MR2} and \cite{yekutieli}, identifies two different ways in which conditioning on selection can affect the Bayesian sampling mechanism, leading to two correct but orthogonal treatments of selection. In the current setting, this distinction may be expressed as follows. Suppose, without loss of generality, that we observe data $y$ and select the first parameter, $\psi_1$. This situation is compatible with two conditional sampling mechanisms for $(\theta, Y)$:
\begin{itemize}
    \item \textit{Fixed-parameter regime}: $\theta$ is sampled from the prior $\pi(\theta)$, fixed, and values of $Y$ are sampled conditionally on $\theta$ until $\psi_1$ gets selected. In this case, the marginal distribution of $\theta$ is not affected by selection and the posterior density has
    \begin{equation*}
        \pi(\theta\mid y, I = 1) \propto \pi(\theta) \frac{ f(y; \theta)}{P(I = 1 \mid \theta)}.
    \end{equation*}
    \item \textit{Random-parameter regime}: pairs $(\theta, Y)$ are sampled jointly until $\psi_1$ gets selected. In this case, selection favours values of $\theta$ for which the probability of selecting $\psi_1$ is larger, and it is easily shown that
    \begin{equation*}
        \pi(\theta\mid y, I = 1) = \pi(\theta\mid y).
    \end{equation*}
    These are the cases where conditioning on selection is redundant.
\end{itemize}
Thus, from this viewpoint, a selection-adjustment of the posterior distribution is needed in the fixed-parameter regime but not in the random-parameter one. Additionally, \cite{handbook_open} observe that, even within the random-parameter case, failing to account for selection can yield inferential procedures with very poor repeated-sampling properties. For example, while the coverage of credible intervals is correct when averaged over the prior distribution, it can behave erratically when assessed conditionally on $\theta$ across a range of plausible values, a feature some Bayesians deem undesirable.

In this paper we adopt the frequentist position. According to the conditional approach, inference on the selected $\psi$ must be based on the conditional distribution of the data given the selection event, denoted by $S$. Henceforth, we refer to this distribution as the \textit{selective distribution}, which has density (or mass function)  
\begin{equation*} 
f(y\mid S; \theta)\equiv f_S(y; \theta ) = \frac{f(y; \theta) p(y)}{\varphi(\theta)}, \quad \varphi(\theta) = \mathbb{E}_\theta[p(Y)].
\end{equation*}
From a repeated-sampling perspective, if inferences are only provided for those samples for which $\psi$ gets selected, use of this sampling distribution for inference ensures that nominal error probabilities coincide with the reported long-term error rates.

In \cite{kuffneryoung} it is argued that the conditional approach is in fact a modern re-expression of well-established Fisherian arguments.  Specifically, the authors consider what they term the \textit{Fisherian proposition}, based on \cite{fisher1, fisher2}, which states that relevance of the inference to the observed data is achieved by conditioning on certain sample statistics. Fisher advocated conditioning on ancillary statistics, distribution constant statistics which therefore do not carry any information on the parameter of interest. This ensures not only relevance to the data, but also no loss of information via conditioning. Later authors, however, proposed conditioning on any statistic that captures evidential strength in the sample \citep{kiefer1, kiefer2, brown, berger}. In the selective context, a natural partitioning of the sample space that restores relevance to the realised data is provided by the selection algorithm. Conditioning on its output divides the sample space into regions that correspond to specification of the same inferential problem.

In a similar vein, we argue that the conditional approach follows from arguments analogous to those underpinning the Conditionality Principle \citep{birnbaum}, as both advocate conditioning on the random events that have occurred right up to the formulation of the inferential problem. However, the nature of the conditioning differs in each case. In the selective context, the conditioning statistic may carry information about the parameter of interest, as it contains evidence that the parameter is sufficiently relevant to warrant formal analysis. While this conditioning might discard information about $\psi$, it is necessary to avoid reusing any information already exploited to select this as the parameter of interest. In doing so, subsequent evidence about it is evaluated exclusively in light of independent evidence.

The conditional approach has received considerable attention in recent years, particularly in the context of inference after variable selection. Interest on this approach increased notably following the realization that some widely used selection rules produce simple selection events, like affine sets, which can be handled with relative ease. This line of work began with \cite{lockhartetal} and has been explored by numerous authors, including \cite{leetaylor, loftustaylor, leeetal, fithianetal, tibsetalboots}; and \cite{panigrahi-integrative}. Notable selection rules of this kind include the LASSO and LARS with fixed penalty parameters, stepwise procedures with a fixed number of steps, and multiplicity correction methods such as the Benjamini-Hochberg procedure \citep{BH}. On the other hand, progress on the conditional approach has been hindered by the analytical complexity of many contemporary selection algorithms. This has led to the development of analytical approximations \citep{panigrahi-scalable}, simulation-based approaches \citep{blackbox}, and randomisation mechanisms \citep{tian, splitting, leiner, witten-thinning}. 

\section{Sampling models} \label{SEC: sampling}

The conditional approach appears straightforward to apply from a conceptual perspective: if the selection function $p(y)$ is known and one is comfortable with a particular sampling model for the data, then the selective distribution is unequivocally determined. However, in this section we argue that, just as the determination of an appropriate sampling distribution in classical statistics is subject to some degree of ambiguity, so is, in some circumstances, the appropriate choice of the conditional distribution for selective inference. To motivate the discussion, let us consider the following example.

\begin{example} \label{EX: random_N}
\textit{Random sample size}. Let $Y_1, \ldots, Y_{n_1}$ be a random sample from a $N(\theta, 1)$ distribution, where the sample size $n_1\geq 1$ has been generated from some known distribution $f_{N_1}(n_1)$. Suppose that we use this sample to test whether $\theta$ is greater than zero. If we observe $y_1 + \ldots  + y_{n_1} > 1.96 n_1^{1/2}$, say, we collect a second set of observations of fixed size $n_2$, $Y_{n_1 + 1}, \ldots, Y_n$, where $n = n_1 + n_2$, which we use to obtain more information about $\theta$. In the joint model for $(N_1, Y_1, \ldots, Y_n)$, with selection function $p(n_1, y_1, \ldots, y_n) = \mathbf{1}(y_1 + \ldots  + y_{n_1} > 1.96 n_1^{1/2})$, the conditional distribution of the full data given selection is
\begin{equation*}
f_S(n_1, y_1, \ldots, y_n; \theta) = \frac{f_{N_1}(n_1) f(y_1, \ldots, y_n\mid n_1 ; \theta) p(n_1, y_1, \ldots, y_n)}{\sum_{\tilde n_1 = 1}^\infty f_{N_1}(\tilde n_1) \Phi(\theta \tilde n_1^{1/2} - 1.96)}.
\end{equation*}
Note that in this model the sample size is not independent of $\theta$, as its probability mass function is given by
\begin{equation*}
f_S(n_1;\theta ) = \frac{f_{N_1}(n_1) \Phi(\theta n_1^{1/2} - 1.96)}{\sum_{\tilde n_1 = 1}^\infty f_{N_1}(\tilde n_1) \Phi(\theta \tilde n_1^{1/2} - 1.96)}.
\end{equation*}
The intuitive explanation of this dependence is that, if the true $\theta$ is smaller than zero, we have a higher chance of falsely concluding that $\theta > 0$ if we have a less informative sample (i.e. if $n_1$ is small), and vice versa. Therefore, inference based on this sampling model would make use of the sample size distribution, and would arrive to a different conclusion had the sample size been decided in advance by the statistician rather than random, which appears counter-intuitive. Instead, it is more reasonable to work with the model
\begin{equation*} \label{EQ: sample size}
f_S(n_1, y_1, \ldots, y_n ; \theta) = \frac{f_{N_1}(n_1) f(y_1, \ldots, y_n\mid n_1 ; \theta) p(n_1, y_1, \ldots, y_n)}{\Phi(\theta n_1^{1/2} - 1.96)}.
\end{equation*}
This model conditions on selection \textit{after} the sample size has been observed, which therefore remains independent of the parameter after conditioning. This reflects more closely the selection process, since the selection test was designed to achieve a certain significance level conditionally on the observed $n_1$. That is, the information about $\theta$ in the selection step is interpreted according to the conditional model $Y_1, \ldots, Y_{N_1}\mid N_1$, rather than to the unconditional one.
\end{example}

Example \ref{EX: random_N} illustrates a key issue about selection models: conditioning on selection can create artificial dependencies in the data that do not reflect the nature of the sampling process. Formally, a statistic $A$ which is distribution-constant unconditionally will typically depend on the parameter after conditioning on selection. However, it is clear that whether a sample is selected or not cannot make $A$ informative. To avoid this, we have to ensure that the selective distribution reflects the selection process as closely as possible.  

Suppose that, unconditionally, the data $Y$ can be reduced to a minimal sufficient statistic $(T, A)$, where $A$ is distribution constant. In such cases $A$ is said to be ancillary for $\theta$, and the Conditionality Principle, a formal re-expression of the Fisherian proposition, asserts that the information provided by $Y$ about $\theta$ is equivalent to that provided by $T$ given the observed value of $A$ \citep{birnbaum}. If the selection process complies with this principle, so that the information about $\theta$ in the selection stage is interpreted via the conditional distribution $T\mid A$, as in Example \ref{EX: random_N}, then it is more reasonable to define the selective distribution by
\begin{equation*}
f_S(t, a; \theta) = \frac{f_A(a) f_{T\mid A}(t\mid a; \theta) p(t, a)}{\varphi(\theta; a)}
\end{equation*}
rather than
\begin{equation*}
f_S(t, a; \theta) = \frac{f_A(a) f_{T\mid A}(t\mid a; \theta) p(t, a)}{\varphi(\theta)},
\end{equation*}
where $p(t, a) = \mathbb{E}[p(Y)\mid t, a]$ denotes the selection function in terms of $(T, A) = (t, a)$, independent of $\theta$ by sufficiency, $f_A(a)$ and $f_{T\mid A}(t\mid a;\theta)$ are the densities or probability mass functions of $A$ and $T\mid A$, and $\varphi(\theta; a) = \mathbb{E}_\theta[p(T, A)\mid a]$ is the selection probability under $\theta$ given $A = a$. Ancillary statistics arise, for example, in models with an underlying group structure, such as location models.

\begin{example}
\textit{Location model}. Let $Y_1, \ldots, Y_n$ be a random sample from a location model with density $f(y_i; \theta) = g(y_i - \theta)$, $\theta\in \mathbb{R}$. In this model, the Conditionality Principle dictates that sample evidence about $\theta$ should be interpreted via the conditional distribution of $T = \hat\theta$ given $A = (Y_1 - \hat\theta, \ldots, Y_{n} - \hat{\theta})$, where $\hat{\theta}$ is the maximum likelihood estimator of $\theta$ and $A$ is the configuration statistic. The density of $\hat{\theta}\mid a$ admits the simple expression
\begin{equation*}
f_{T\mid A}(t\mid a; \theta) = c(\theta, a)\prod_{i = 1}^n g(t + a_i - \theta),
\end{equation*}
where $c(\theta, a)$ is a normalising constant. Suppose that $\theta$ is selected for inference if the $p$-value
\begin{equation*}
u(T, a) = c(0, a)\int_{\hat\theta}^\infty \prod_{i = 1}^n g(\tilde t + a_i) \mathrm{d}\tilde t
\end{equation*}
is below some level $\alpha$. Then, the selective density of $(\hat{\theta}, A)$ should be
\begin{equation*}
f_S(t, a; \theta) = \frac{f(a)c(\theta, a)\prod_{i = 1}^n g(t + a_i - \theta) p(t, a)}{\mathbb{P}_\theta(u(T, a) \leq \alpha\mid a)},
\end{equation*}
where $p(t, a) = \mathbf{1}\{ u(t, a)\leq \alpha \}$.
\end{example}

A similar case can be made about models involving nuisance parameters. Suppose that $\theta = (\psi, \chi)$, where $\psi$ is the potential parameter of interest and $\chi$ is a nuisance parameter. In these models it is sometimes possible to identify a minimal sufficient statistic $(T, A)$ such that the distribution of $T\mid A$ is independent of $\chi$ and the distribution of $A$ is independent of $\psi$, or depends on it in such a way that through observation of $A$ alone no information can be extracted about $\psi$ \citep[Chapter 2]{bncox}. In such cases, a natural extension of the Conditionality Principle asserts that information about $\psi$ should be interpreted via the conditional model $T\mid A$. As before, if the selection process interprets the sample information about $\psi$ via the conditional distribution $T\mid A$, it is more natural to define the selective density as 
\begin{equation*}
f_S(t, a; \theta) = \frac{f_A(a;\theta) f_{T\mid A}(t\mid a; \psi) p(t, a)}{\varphi(\psi; a)}
\end{equation*}
rather than
\begin{equation*}
f_S(t, a; \theta) = \frac{f_A(a;\theta) f_{T\mid A}(t\mid a; \psi) p(t, a)}{\varphi(\psi, \chi)},
\end{equation*}
where we are using the same notational conventions as before.

An important setting where this argument applies is when inference is sought for a component of the canonical parameter in a full exponential family. Let the model be given by
\begin{equation*}
f(y; \theta) = h(y) \exp\left\{ \psi s_1(y) + \chi^T s_2(y) - K(\theta) \right\}.
\end{equation*}
Standard arguments show that the distribution of $s_1(Y) \mid s_2(Y) $ is independent of $\chi$, and even though the distribution of $s_2(Y)$ usually depends on $\psi$, power considerations based on Neyman-Pearson theory indicate that inference for $\psi$ should be provided via the conditional distribution $s_1(Y) \mid s_2(Y) $ \cite[][Chapter 7]{youngsmith}. Moreover, $s_2(Y)$ is uninformative for $\psi$ in a sense made precise by \cite{jorgensen}, so the Conditionality Principle applies. 

A second important class of examples with this structure that arise frequently in selective inference is the following.

\begin{example} \label{EX: winners}
\textit{Inference on winners}. Suppose that $Y_1, \ldots, Y_m$ are independent observations with densities $f(y_i; \theta_i)$, $i = 1, \ldots, m$, and suppose that we want to provide inference for the parameter which produces the largest observation, $\theta_I = \argmax\{y_i\colon i = 1, \ldots, m\}$. Assume without loss of generality that $I = 1$, so that $T = Y_1$ and $A = (Y_2, \ldots, Y_m)^T$. We can identify two natural sampling mechanisms that are consistent with selection of the first mean. In the first one, the whole vector $(Y_1, \ldots, Y_m)^T$ is sampled until $Y_1$ is observed to be the maximum. The corresponding density of this generative process is
\begin{equation} \label{EQ: selmean1}
f_S(y_1, y_2\ldots, y_m; \theta) = \frac{\prod_{i=1}^m f(y_i; \theta_i) \mathbf{1}(I = 1)}{\mathbb{P}_\theta(Y_1 > Y_i \text{ }\forall \text{ } i>1)}.
\end{equation}
The second sampling mechanism is that in which $(Y_2, \ldots, Y_m)^T$ is sampled from its unconditional distribution, and conditionally on its observed value, $Y_1$ is sampled until it exceeds $\max\{ y_2, \ldots, y_m\}$. The density of the data under this model is
\begin{equation} \label{EQ: selmean2}
f_S(y_1, y_2\ldots, y_m; \theta) = \frac{\prod_{i=1}^m f(y_i; \theta_i) \mathbf{1}(I = 1)}{\mathbb{P}_{\theta_1}(Y_1 > Y_i \text{ }\forall \text{ } i>1\mid y_2, \ldots, y_m)}.
\end{equation}
\end{example}

In the first model, conditioning on selection breaks the independence structure of the data. As a consequence, the observations from the non-selected parameters depend on $\theta_1$, and the marginal distribution of $Y_1$ depends on $\theta_2, \ldots, \theta_m$. This makes manipulation of the likelihood function awkward and computationally expensive if $m$ is large, despite the apparent simplicity of the problem. However, one could nevertheless argue that it is appropriate to condition on the observed values of $Y_2, \ldots, Y_m$ for either of the following two reasons:
\begin{enumerate}
    \item If selection is used to determine whether $\theta_1$ is the largest $\theta_i$, since the observed values of $Y_2, \ldots, Y_m$ do not provide any direct information about this fact, it appears more reasonable to conduct the selection step conditionally on $a = (y_2, \ldots, y_m)^T$. Thus, preference of the second inferential model over the first can be justified without resorting to computational considerations.
    \item Even if the previous argument does not apply in a given problem, we might still decide to condition on $a$ on top of the selection event on the basis that the conditional distribution of $A$ given the selection event depends very mildly on the parameter of interest (it is non-informative, in a certain sense), and leads to simpler inferences. We formalise this notion in the following section. 
\end{enumerate}

Since the distribution of $Y_2, \ldots, Y_m$ depends on $\theta_1$ given selection, conditioning on them, even if advisable on conceptual grounds, is expected to reduce inferential power. We have investigated this empirically in \cite{handbook}, in a situation where the marginal distributions of the $Y_i$'s are Gaussian; see also Section 3 of \cite{handbook_open}. The main conclusion is that inference based on \eqref{EQ: selmean1} is slightly more powerful than inference based on the simpler model \eqref{EQ: selmean2} (as indicated by confidence intervals 5-10\% shorter). However, it requires considerably more computational power than the latter and is more sensitive to numerical approximations.

Another important area of application of selective inference involving ancillary statistics is regression after variable selection, which was briefly introduced in Example \ref{EX: regression}. In this setting, data consists of an $n$-dimensional response vector $Y$ and a $n\times p$ design matrix $X$, containing information on $p$ covariates. If $p$ large, either absolutely or relative to $n$, it is often convenient to reduce the set of covariates to be analysed. This reduction may be conducted to increase the interpretability of the fitted model, for computational or analytical efficiency, or because it is believed that only a few covariates have explanatory power over $Y$, and by getting rid of the inactive ones higher inferential power can be achieved. Following the notation of Example \ref{EX: regression}, let $s\subseteq \{1, \ldots, p\}$ be the subset of selected covariates and $X(s)$ be the corresponding sub-design matrix.

An ongoing matter of discussion in this context concerns the adequacy of conditioning on $X$ in cases where it has been randomly generated. In standard regression settings, a fixed-design approach can be compellingly justified by appealing to the Conditionality Principle, provided it is reasonable to assume that the distribution of $X$ carries no information about the regression parameter $\mu = E(Y\mid X)$. By contrast, incorporating a selection step into the analysis introduces additional complexity that must be accounted for. We distinguish the following two scenarios, which lead to two different treatments of the design matrix.

\textbf{Scenario 1}. Sometimes the goal of the selection step is to decide which joint distribution from $\{(Y, X(s))\colon s\subseteq \{1, \ldots, p\}\}$ is going to be analysed. That is, the selected parameter of interest is $\psi_s = E[Y\mid X(s)]$. In these circumstances, conditioning on $X$ before selection is not appropriate, as this would prevent consideration of the different joint distributions. Once we have selected the distribution for inference, the conditional approach dictates that the regression of $y$ on $X(s)$ ought to be carried out conditionally on the selection event $E = \{ (y, X) \colon s(y, X) = s\}$, where $s(y, X)$ is the output of the selection algorithm applied to a generic $(y, X)$. In general, the conditional distribution of $X(s)\mid E$ depends on the regression parameter $\psi_s$, so $X(s)$ would not be ancillary for it and a fixed-design analysis would not be justified. Conditioning on $E$ with a random $X$ is very challenging, as it requires modeling its distribution, and to the best of our knowledge the only viable proposal to achieve validity in this setting is data splitting \citep{rinaldo}.

\textbf{Scenario 2}. A different type of selection problem is as follows. In some cases, we may want to assess the effect of the selected covariates $X(s)$ relative to the full set of covariates $X$. In this context, standard arguments justify conditioning on $X$ before carrying out selection, thereby leading to a fixed-design approach. The selection event is then $E = \{ y \colon s(y, X) = s\}$, where $X$ is fixed at its observed value. This problem has been extensively studied and there are multiple analytical and computational procedures available to solve it, depending on the selection algorithm and the error distribution of $Y\mid X$ \citep{leetaylor, loftustaylor, leeetal, fithianetal, tibsetalboots, panigrahi-integrative, panigrahi-scalable, splitting, blackbox}. Typically, the parameter of interest in this context is the so-called projection parameter $\psi_s = [X(s)^TX(s)]^{-1}X(s)^T\mu$, which provides the best linear predictor of $Y$ using only the variables in $s$, though alternative parameters have been proposed.

\section{Ancillarity in selection models with nuisance parameters}

In the previous section, we argued that the choice of inferential model should account for whether the information available at the selection stage was analysed conditionally on an ancillary statistic. When this is not the case, statistics that were independent of the selected parameter before selection may become dependent on it after conditioning. This raises the key question of whether conditioning on selection actually renders these statistics informative about the parameter of interest.

The most intuitive definition of ancillarity in the presence of nuisance parameters requires the marginal distribution of the ancillary statistic $A$ to be free of $\psi$, which as we have just seen does not usually hold in selective distributions. Weaker definitions of ancillarity have been suggested that allow $A$ to depend on $\psi$, but in such a way that if we only observed $A$ we would not be able to extract any information about $\psi$. In this section we are going to show that two of these proposed notions of ancillarity are preserved after conditioning on selection, and can thus be invoked to justify conditioning on these statistics in situations not covered by the discussion in Section \ref{SEC: sampling}.

Throughout this section we will assume that we can write $\theta = (\psi, \chi)$, that the parameter space can be written as $\Theta = \Theta_1 \times \Theta_2$, where $\Theta_1$ and $\Theta_2$ are the parameter spaces of $\psi$ and $\chi$, and that there exists a minimal sufficient statistic $(T, A)$ for $\theta$ such that the distribution of $T\mid A$ does not depend on $\chi$. Instead of modifying the selective sampling distribution according to the nature of the sampling process, as in the previous section, we now assume that the selective distribution is $f_S(y; \theta)\propto f(y; \theta) p(y)$, and justify the change of inferential distribution differently.

In the previous setting, \cite{Godambe1} calls $A$ ancillary if it is complete for $\chi$ for any fixed value of $\psi$. This is, if, for all $\psi$, whenever $\mathbb{E}_{\psi, \chi}[g(A)] = 0$ for all $\chi \in \Theta_2$ and some real-valued measurable function $g$, it must be the case that $\mathbb{P}_{\psi, \chi}\{g(A) = 0\} = 1$ for all $\chi\in \Theta_2$. We will refer to this type of ancillarity as $G$-ancillary. \cite{Godambe2} showed that if $A$ is $G$-ancillary, then the distribution of $T$ contains the same information about $\psi$ as the distribution of $T\mid A$, so that the latter can be used for inference about $\psi$ without any loss of information. It is easy to see that $G$-ancillarity is preserved after conditioning on selection events under minimal assumptions. We have the following result.
\begin{proposition}\label{PROP: godambe}
If the selection mechanism is such that $\varphi(\psi; a) > 0$ for all possible $(\psi, a)$, then $A$ is $G$-ancillary in the non-selective model if and only if it is $G$-ancillary in the selective model.
\end{proposition}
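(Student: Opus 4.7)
The proof will hinge on a clean form for the marginal selective density of $A$. Since $T\mid A$ does not depend on $\chi$, integrating the joint selective density $f_S(t,a;\theta)\propto f_{T\mid A}(t\mid a;\psi)\,f_A(a;\psi,\chi)\,p(t,a)$ with respect to $t$ gives
\[
f_{S,A}(a;\psi,\chi)\;=\;\frac{f_A(a;\psi,\chi)\,\varphi(\psi;a)}{\varphi(\psi,\chi)},
\]
where $\varphi(\psi;a)=\mathbb{E}_\psi[p(T,A)\mid A=a]$ depends only on $\psi$. This exhibits the selective marginal family as a $\chi$-free reweighting of the non-selective marginal family, which is exactly what should make the two notions of completeness equivalent.

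Fix $\psi$. The key observation is that, thanks to $\varphi(\psi;a)>0$, this reweighting is a bijection on the relevant function classes. For any measurable $g$, setting $\tilde g(a):=g(a)\,\varphi(\psi;a)$ yields
\[
\mathbb{E}_{\psi,\chi}[\tilde g(A)]\;=\;\varphi(\psi,\chi)\,\mathbb{E}_{S;\psi,\chi}[g(A)]\qquad\text{for every }\chi,
\]
while the inverse substitution $\tilde g(a):=g(a)/\varphi(\psi;a)$ gives $\mathbb{E}_{S;\psi,\chi}[\tilde g(A)]=\mathbb{E}_{\psi,\chi}[g(A)]/\varphi(\psi,\chi)$. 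Hence the condition $\mathbb{E}_{S;\psi,\chi}[g(A)]=0$ for all $\chi$ is equivalent to $\mathbb{E}_{\psi,\chi}[g(A)\,\varphi(\psi;A)]=0$ for all $\chi$, and symmetrically for the reverse direction.

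Combining these substitutions with the positivity assumption closes both implications. If $A$ is $G$-ancillary in the non-selective model and $\mathbb{E}_{S;\psi,\chi}[g(A)]=0$ for all $\chi$, apply non-selective $G$-ancillarity to $g\cdot\varphi(\psi;\cdot)$ to obtain $g(A)\varphi(\psi;A)=0$ almost surely, and then use $\varphi>0$ to conclude $g(A)=0$; conversely, if $A$ is $G$-ancillary in the selective model and $\mathbb{E}_{\psi,\chi}[g(A)]=0$ for all $\chi$, apply selective $G$-ancillarity to $g/\varphi(\psi;\cdot)$. Finally, because $\varphi(\psi;a)>0$ everywhere, $f_A(\cdot;\psi,\chi)$ and $f_{S,A}(\cdot;\psi,\chi)$ are mutually absolutely continuous for every $\chi$, so an almost-sure conclusion under one marginal family is automatically an almost-sure conclusion under the other. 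There is no genuine obstacle: integrability of the transformed functions is inherited from the identities above, and the positivity assumption is doing all the real work, both making the reweighting invertible and equating the null sets of the two families.
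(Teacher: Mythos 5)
Your proposal is correct and follows essentially the same route as the paper's proof: both hinge on the identity $\mathbb{E}_{\psi,\chi}[g(A)\mid S]=\mathbb{E}_{\psi,\chi}[g(A)\,\varphi(\psi;A)]/\varphi(\psi,\chi)$ together with the positivity of $\varphi(\psi;a)$ to transfer completeness of $A$ for $\chi$ between the two models, and you are in fact more explicit than the paper on the converse direction (via the substitution $g/\varphi(\psi;\cdot)$ and the mutual absolute continuity of the two marginal families), which the paper dispatches with ``the reciprocal claim follows by the same type of argument.'' The only piece of the paper's argument you omit is the preliminary verification that $(T,A)$ remains minimal sufficient in the selective model (shown there via the likelihood-ratio characterisation, since $p(y_1)/p(y_2)$ is free of $\theta$), which is needed so that the setting in which $G$-ancillarity is defined carries over to the post-selection model.
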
 
\begin{proof}
First of all, we have that, for any $y_1, y_2$ in the support of the selective distribution,
\begin{equation*}
\frac{f_S(y_1 ; \theta )}{f_S(y_2; \theta )} = \frac{f(y_1; \theta )}{f(y_2; \theta )}\frac{p(y_1)}{p(y_2)} 
\end{equation*}
is free of $\theta$ if and only if $f(y_1; \theta )/f(y_2; \theta )$ is free of $\theta$, which, by the minimal sufficiency of $(T, A)$ in the non-selective regime, holds if and only if $(T, A)$ is equal for $y_1$ and $y_2$. This implies that $(T, A)$ is also minimal sufficient for $\theta$ in the selective model. Secondly, we need to show that the distribution of $T\mid A$ in the selective model is free of $\chi$. This is trivial, as
\begin{equation*}\label{EQ: reducedconditionalmodel2}
f_{S}(t \mid  a ; \theta ) = \frac{p(t, a)}{\varphi(\psi;a)}   f(t\mid a; \psi).
\end{equation*}
Now, for $A$ to be $G$-ancillary in the non-selective model it means that the condition $\mathbb{E}_{\psi, \chi}[g(A)] = 0$ for all $\chi$ implies that $\mathbb{P}_{\psi, \chi}\{g(A) = 0\} = 1$ for all $\chi$, given an arbitrary $\psi$. However, note that 
\begin{equation*}
\mathbb{E}_{\psi, \chi}[g(A)\mid S] = \frac{\mathbb{E}_{\psi, \chi}[g(A)\varphi(\psi; A)]}{\varphi(\psi, \chi)},
\end{equation*}
so if $\mathbb{E}_{\psi, \chi}[g(A)\mid S] = 0$ we must have that $\mathbb{P}_{\psi, \chi}\{ g(A)\varphi(\psi; A) = 0\} = 1$. With the assumption that $\varphi(\psi; a) $ is positive everywhere, this implies that $\mathbb{P}_{\psi, \chi}\{ g(A) = 0\} = 1$, so $A$ is $G$-ancillary in the selective model. The reciprocal claim follows by the same type of argument.
\end{proof}

The second notion of ancillarity that we will consider stems from the field of noninformation theory, introduced by Barndorff-Nielsen in \cite{bn2} and extended by \cite{jorgensen}. Several types of ancillarity can be derived from this work. In the selective inference context, the most relevant one is $M$-ancillarity and extensions thereof, all of which follow from the following general definition: if, for every possible pair $(\psi, a)$, the model $\{f_A(a; \psi, \chi) \colon \chi \in \Theta_2\}$ provides a \textit{perfect fit} for the data $a$, then $A$ is said to be ancillary for $\psi$ \citep{jorgensen}. Different types of ancillarity can then be derived by adopting different definitions of perfect fit. 

The rationale behind this definition is as follows: if two models provide a perfect fit for a given observation, then both are equally good at explaining the data, and we cannot assume that one is more likely to have generated the data than the other. Thus, in the previous setting, we cannot compare two values of $\psi$, $\psi_1$ and $\psi_2$, say, only through observation of $A = a$, because the models $\{f_A(a; \psi_1, \chi) \colon \chi \in \Theta_2\}$ and $\{f_A(a; \psi_2, \chi) \colon \chi \in \Theta_2\}$ explain observation of $a$ equally well.

In the case of $M$-ancillarity, perfect fit is defined as follows \citep{bn2}. A family of densities or mass functions $\{g(x; \lambda)\colon \lambda \in \Lambda\}$ is said to provide a perfect fit for a given observation $x\in \mathcal{X}$ if there exists a $\lambda\in \Lambda$ such that $x$ is an approximate mode of $g(x; \lambda)$. Formally, if
\begin{equation} \label{EQ: M_noninformation}
\text{for all } \varepsilon > 0 \text{ there exists a } \lambda\in \Lambda \text{ such that } g(x; \lambda) > (1 - \varepsilon) \sup_{\tilde x\in \mathcal{X}} g(\tilde x; \lambda).
\end{equation}

One drawback of this definition is that it is not invariant under transformations of the data. For example, the family of exponential distributions does not provide a perfect fit for any positive observation, as all the densities have mode $0$. However, one can easily show that after applying a log-transformation the family provides a perfect fit for any real observation. To remove this arbitrariness, we propose to change the definition and require instead that the previous condition holds under some change of variable. This produces a new type of ancillarity, which we term $\tilde{M}$-ancillarity. We will assume that the distribution of $A$ is continuous for simplicity. Extension to the discrete case is straightforward.

\begin{definition} \label{DEF: tilde_M}
Let $\mathcal{A}$ be the sample space of $A$ and assume it is open. $A$ is $\tilde M$-ancillary for $\psi$ if, for all $\psi\in \Theta_1$, there exists a smooth bijection $h$ such that $\{f_{h(A)}(z; \psi, \chi) \colon \chi \in \Theta_2\}$ provides a perfect fit for any $z\in h(\mathcal{A})$ in the sense of Equation \eqref{EQ: M_noninformation}.
\end{definition}

We then have the following proposition.
\begin{proposition}
If the selection mechanism is such that $\varphi(\psi; a) > 0$ for all possible $(\psi, a)$, then $A$ is $\tilde M$-ancillary in the non-selective model if and only of it is $\tilde M$-ancillary in the selective model. 
\end{proposition}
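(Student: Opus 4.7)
My plan is to reduce $\tilde M$-ancillarity in the selective model to that in the non-selective model by absorbing the selection-induced tilt into the auxiliary bijection $h$. The starting point, as in the proof of Proposition~\ref{PROP: godambe}, is the identity
\begin{equation*}
f^{S}_{A}(a;\psi,\chi) \;=\; \frac{\varphi(\psi;a)\, f_{A}(a;\psi,\chi)}{\varphi(\psi,\chi)},
\end{equation*}
obtained by integrating the joint selective density of $(T,A)$ over $t$ and using that $\varphi(\psi;a)$ is $\chi$-free because $T\mid A$ is $\chi$-free. The crucial structural feature is that the selective-to-non-selective likelihood ratio for $A$ factors as $\omega(\psi,a)\cdot c(\psi,\chi)$, where $\omega(\psi,a)=\varphi(\psi;a)>0$ depends only on $(\psi,a)$ and $c(\psi,\chi)=1/\varphi(\psi,\chi)$ depends only on $(\psi,\chi)$. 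Since $\tilde M$-ancillarity is defined up to a smooth change of variable, the idea is to undo the $a$-dependent tilt by composing $h$ with a second, suitably chosen diffeomorphism.

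For the forward direction, I would fix $\psi\in\Theta_1$, let $h_{\psi}\colon\mathcal{A}\to h_{\psi}(\mathcal{A})$ be the smooth bijection provided by $\tilde M$-ancillarity in the non-selective model, and construct a second smooth bijection $g_{\psi}$ on $h_{\psi}(\mathcal{A})$ satisfying $|\det Dg_{\psi}(w)|=\omega(\psi,h_{\psi}^{-1}(w))$ for every $w$. In one dimension this is immediate by integration of the positive function $\omega\circ h_{\psi}^{-1}$; in higher dimensions a triangular Knothe-type map
\begin{equation*}
g_{\psi}(w_{1},\ldots,w_{d})=\Bigl(\int_{w_{1}^{0}}^{w_{1}}\omega\bigl(\psi,h_{\psi}^{-1}(u,w_{2},\ldots,w_{d})\bigr)\,\mathrm{d}u,\ w_{2},\ \ldots,\ w_{d}\Bigr)
\end{equation*}
serves the purpose on product-shaped subregions. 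Setting $\tilde h_{\psi} = g_{\psi}\circ h_{\psi}$, a direct change-of-variable computation then gives
\begin{equation*}
f^{S}_{\tilde h_{\psi}(A)}(u;\psi,\chi) \;=\; \frac{f_{h_{\psi}(A)}\bigl(g_{\psi}^{-1}(u);\psi,\chi\bigr)}{\varphi(\psi,\chi)},
\end{equation*}
because the Jacobian of $g_{\psi}^{-1}$ exactly cancels the tilt $\omega$. Since $g_{\psi}^{-1}$ is a bijection, taking suprema over $u$ is equivalent to taking them over $w=g_{\psi}^{-1}(u)$, so the ratio of the selective density of $\tilde h_{\psi}(A)$ to its supremum coincides with the corresponding ratio for $f_{h_{\psi}(A)}(\cdot;\psi,\chi)$, and the perfect-fit condition \eqref{EQ: M_noninformation} transfers verbatim from $h_{\psi}$ to $\tilde h_{\psi}$.

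The converse direction will be symmetric: starting from a bijection that establishes $\tilde M$-ancillarity in the selective model and running the same construction with $\omega$ replaced by $1/\omega$ produces a bijection that works in the non-selective model. The hypothesis $\varphi(\psi;a)>0$ is used precisely here, to guarantee that $\omega$ and $1/\omega$ remain strictly positive so that the auxiliary maps are genuine diffeomorphisms. I expect the main obstacle to be the construction of $g_{\psi}$ with prescribed positive Jacobian on the possibly irregular open set $h_{\psi}(\mathcal{A})$: the triangular formula above handles open product regions directly, and I would reduce the general case either via a partition of unity or by appealing to the Dacorogna--Moser theorem on prescribed Jacobians. Once that is in place, the remainder of the argument amounts to routine change-of-variable bookkeeping.
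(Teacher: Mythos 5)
Your proposal is correct and follows essentially the same route as the paper's proof: both absorb the selection tilt $\varphi(\psi;a)$ into the Jacobian of a second bijection composed with the one witnessing $\tilde M$-ancillarity in the non-selective model, so that the perfect-fit (approximate mode) condition transfers unchanged, with the converse handled symmetrically. The only difference is that you explicitly address the existence of a diffeomorphism with prescribed positive Jacobian (via a Knothe-type map or Dacorogna--Moser), a point the paper simply asserts.
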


\begin{proof}
The first part of the proof is the same as in Proposition \ref{PROP: godambe}. The marginal density of $A$ given selection is
\begin{equation*}
f_{S}(a;\theta) = \frac{\varphi(\psi;a)}{\varphi(\theta)} f_A(a;\theta).
\end{equation*}
Suppose that $A$ is $\tilde M$-ancillary for $\psi$ in the non-selective model. Let $\psi$ be arbitrarily fixed and define $Z_1 = h_1(A)$, and $Z_2 = h_2(Z_1)$, where $h_1 $ is a bijection such that $Z_1$ satisfies the condition of Definition \ref{DEF: tilde_M}, and $h_2$ is a bijection with absolute Jacobian determinant $\varphi(\psi;h_1^{-1}(z_1))\vert Jh_1^{-1}(z_1) \vert$, where $\vert J h_1^{-1}(z_1) \vert$ denotes the absolute Jacobian determinant of $h_1^{-1}(z_1)$. By construction, the density of $Z_2$ given selection is proportional to $f_{Z_1}(h_2^{-1}(z_1); \psi, \chi)$. By assumption, for any $z_1$ there exists a $\chi$, $\hat\chi(z_1)$, such that $z_1$ is a mode of $f_{Z_1}(\cdot; \psi, \hat\chi(z_1))$. Hence, for all $z_2$, $\hat\chi\{h_2^{-1}(z_2)\}$ is such that $z_2$ is a mode of $f_{S, Z_2}(\cdot; \psi, \hat\chi\{h_2^{-1}(z_2)\})$, which shows $\tilde M$-ancillarity of $A$ in the selective model. 
The converse can be proved analogously.
\end{proof}

The two forms of ancillarity considered in this section can be used to formally justify conditioning on certain sample statistics which depend on the parameter of interest. The simplest scenario is that described in Example \ref{EX: winners}, where the non-selective distribution of the data is the product of $m$ independent random variables with different underlying parameters. In that example, if the data from the non-selected parameters, $A = (Y_2, \ldots, Y_m)^T$, is either complete or is such that the set of possible modes is the whole sample space, and if $\varphi(\psi; a)$ is positive everywhere, then $A$ is respectively $G$ or $\tilde M$-ancillary for $\psi$ in the selective model, and adherence to the Conditionality Principle would require providing inference for the selected parameter conditionally on the data from the non-selected parameters.
This occurs, for example, if $Y_i\sim N(\theta_i, 1)$, $\theta_i \in \mathbb{R}$. 

This can be generalised to more interesting situations. Consider a situation where $Y\sim N(\theta, I_n)$ follows a $p$-dimensional normal distribution for some $p> 1$, and inference is provided for a linear function $\psi = \eta^T \theta$. This problem arises, for example, in regression settings with Gaussian errors and a fixed design, when interest lies on the coefficients of the projection parameter $\psi_s = [X(s)^TX(s)]^{-1}X(s)^T\mu$. In this case, if $P$ is a matrix containing $p-1$ linearly independent vectors orthogonal to $\eta$ as columns, then $A = P^T Y$ can be easily seen to be both $G$ and $\tilde{M}$-ancillary for $\psi$ without selection, and therefore also after selection. On defining $T = \eta^T Y$, one can easily show that the distribution of $T\mid A$ given selection is a truncated one-dimensional Gaussian distribution, which admits an exact analytical characterization provided the selection event is not too complicated. For selection events which can be expressed as a union of polyhedra, analytical characterisations are provided in \cite{leeetal}.

\section{Discussion}

Conditioning plays a fundamental role in all approaches to statistical inference. In selective inference, the choice of conditioning strategy is particularly important, as it can significantly impact the complexity of the problem and the validity of the results. We have discussed two ways to make this decision. The first relies on carefully assessing how information is processed during the selection stage. The second one exploits refined definitions of ancillarity that work nicely in selection problems.

Ancillarity in the presence of nuisance parameters is an intricate subject, and multiple definitions of it have been proposed, some of which are not preserved by conditioning on selection. A more complete examination of the relation between selection and ancillarity would be interesting.

\backmatter


\section*{Statements and declarations}

The authors have no conflicts of interest to disclose.

\bibliography{references}

\end{document}